\newenvironment{proof}[1][Proof]{\begin{trivlist}
\item[\hskip \labelsep {\bfseries #1}]}{\end{trivlist}}
\newenvironment{definition}[1][Definition]{\begin{trivlist}
\item[\hskip \labelsep {\bfseries #1}]}{\end{trivlist}}
\newtheorem{theorem}{Theorem}[section]
\newtheorem{lemma}[theorem]{Lemma}
\newcommand{\qed}{\nobreak \ifvmode \relax \else
      \ifdim\lastskip<1.5em \hskip-\lastskip
      \hskip1.5em plus0em minus0.5em \fi \nobreak
      \vrule height0.75em width0.5em depth0.25em\fi}
\newcommand{\si}{\sigma}
\title{Fibered and primitive/Seifert twisted torus knots}
\author{Brandy Guntel Doleshal}
\begin{document}

\maketitle{}

\begin{abstract}
The twisted torus knots lie on the standard genus 2 Heegaard surface for $S^3$, as do the primitive/primitive and primitive/Seifert knots. It is known that primitive/primitive knots are fibered, and that not all primitive/Seifert knots are fibered. Since there is a wealth of primitive/Seifert knots that are twisted torus knots, we consider the twisted torus knots to partially answer the question of which primitive/Seifert knots are fibered. A braid computation shows that a particular family of twisted torus knots is fibered, and that computation is then used to generalize the results of a previous paper by the author.
\end{abstract}

\section{Introduction}

In \cite{berge}, John Berge introduced the primitive/primitive knots and noted that they have lens space surgeries. Later, John Dean \cite{dean} described a generalization, called primitive/Seifert knots, and observed that the primitive/Seifert knots have Seifert fibered surgeries. (The definitions of primitive/primitive and primitive/Seifert can be found at the beginning of Section \ref{bigonesection}.)

Since it was not clear that the knots in \cite{berge} were all of the primitive/primtive knots, Berge's list of knots became known as Berge knots. In \cite{osvathszabo}, Ozsv{\'a}th and Szab{\'o} show that the Berge knots are fibered (i.e. the complement of each knot, in $S^3$, is a fiber bundle over the circle). Later, in the classification of primitive/primitive and primitive/Seifert knots \cite{bgk}, it was shown that the Berge knots are in fact all the primitive/primitive knots. Hence primitive/primitive knots are fibered. Alternatively, work of Ni \cite{yini} shows that knots with lens space surgeries (e.g. primitive/primitive knots) are fibered.  

On the other hand, knots admitting Seifert fibered surgeries are not all fibered, but there is no classification of those that are fibered and those that are not. In \cite{dean}, Dean describes a family of twisted torus knots that are primitive/Seifert: $K(p,q, p-kq, -1)$ where $2 \le q \le \frac{p}{2}$ and $k$ is an integer with $2 \le k \le \frac{p-2}{q}$. Since Dean's knots are easily described as braids, they are a good first family of knots to explore when one would like to describe fibered primitive/Seifert knots. In fact, when a knot in this family satisfies the condition $p-kq<q$, the knot is fibered, and more generally, the twisted torus knot $K(p,q,r,-n)$ with $n>0$ and $r<q$ is fibered when $nq<p$, as shown in Theorem \ref{fiberedttks}. The proof of Theorem \ref{fiberedttks} uses braid group calculations to show that the knot in question has a positive braid representative, and is, therefore, fibered. As of this writing, the only known knots in Theorem \ref{fiberedttks} with a primitive/Seifert representative are the ones in Theorem \ref{thebigone}: $K(p,q, p-kq, -1)$ where $2 \le q \le \frac{p}{2}$ and $k$ is an integer with $2 \le k \le \frac{p-2}{q}$ and $p-kq <q$. (For ease of calculation, the parameters are expressed slightly differently in Theorem \ref{thebigone} than in Dean's work.)

The word for the positive braid representative found in the proof of Theorem \ref{fiberedttks} is a tidier way of describing the knots in that theorem. Since Dean's primitive/Seifert twisted torus knots that have $p-kq <q$ and $n=1$ are a special case of the knots in Theorem \ref{fiberedttks}, we not only know that they are fibered but we can use the positive braid word to expand on previous work by this author \cite{mypaper}. In Section \ref{bigonesection}, we use the word from the proof of Theorem \ref{fiberedttks} to prove Theorem \ref{thebigone}: there is an infinite family of knots that have either two primitive/Seifert representatives with the same surface slope or a primitive/primitive and a primitive/Seifert representative sharing the same surface slope. Theorem \ref{thebigone} provides a mutual generalization of two theorems in \cite{mypaper}. 

Using different methods than those employed here, Eudave-Mu\~noz, Miyazaki and Motegi find a family of knots with Seifert fibered surgeries and distinct primitive/Seifert positions with the same slope \cite{emmm}. Their knots are twisted torus knots of the form $K(p,q,p+q,n)$, where $|n|\ge2$, so they are distinct from the family of knots with the same properties in Theorem \ref{thebigone}.

The author would like to thank Cameron Gordon for valuable conversations and suggestions and the reviewer for helpful input. This work is partially supported by NSF RTG Grant DMS-0636643.

\section{Fibered knots and braids}\label{fkb}

Some knots have complements with added structure, giving us more to work with while studying them. One example of this is the class of fibered knots.

\begin{definition}
A knot $K$ in $S^3$ is fibered if $S^3 - K$ is homeomorphic to $(F \times I )/f$, where $F$ is the interior of a Seifert surface for $K$ and the map $f: F\times \{0\} \rightarrow F\times \{1\}$ is a homeomorphism. 
\end{definition}

Here, we give a family of twisted torus knots, each of which is fibered. The definition of the twisted torus knot can be traced through Dean's work \cite{dean} (with the requirement that $m=1$ in his description of $K(p,q,r,m,n)$).

\begin{definition}
The \textit{twisted torus knot} $K(p,q,r,n)$ is obtained from the torus knot $T(p,q)$ by twisting $r$ strands of $T(p,q)$ $n$ full twists.
\end{definition}

 The twisted torus knot can be viewed as a curve lying in the standard genus 2 Heegaard surface for $S^3$ in the following way. Let $D$ be a disk in the torus so that $T(p,q)$ intersects $D$ in $r$ disjoint arcs, for $0\le r\le p+q$, which are parallel in $D$ and coherently oriented. Let $r$ parallel coherently-oriented copies of the torus knot $T(1,n)$, denoted $rT(1,n)$, lie in a torus so that each copy of $rT(1,n)$ intersects a disk $D'$ in the torus in one arc. (These arcs will be parallel in $D'$ and coherently oriented.) That is, $rT(1,n)$ intersects $D'$ in $r$ disjoint arcs, one for each component of $rT(1,n)$. We excise the disks $D$ and $D'$ from their respective tori and glue the punctured tori together along their new boundaries so that the orientations of the torus links $T(p,q)$ and $rT(1,n)$ align correctly. Figure \ref{ttk} shows the twisted torus knot $K(3, 5 ,2,-1)$ before the excision of $D$ and $D'$.

\begin{figure}[ht]\label{ttk}
\begin{center}
\includegraphics[scale=.3]{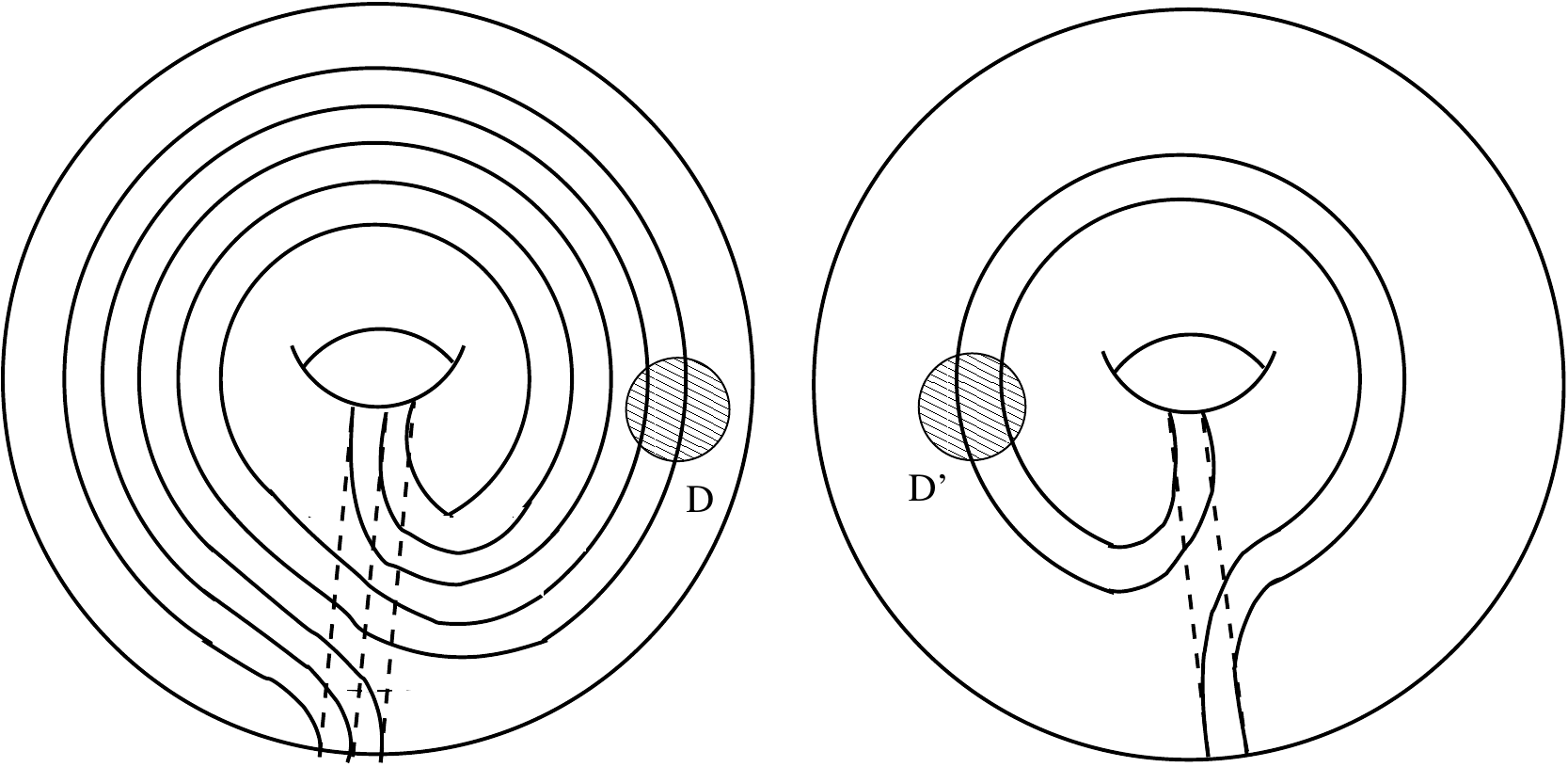}
\caption{$K(3, 5 ,2,-1)$}
\end{center}
\end{figure}

Since $K(p,q,r,n)$ typically has many crossings, considering the Seifert surface may be quite difficult. Fortunately, the twisted torus knots are easily seen as closures of braids. The torus knot $T(p,q)$ is represented by the braid $(\si_{q-1} \si_{q-2} \cdots \si_1)^p$. Since $K(p,q,r,n)$, with $r<p$ and  $r<q$, is obtained from $T(p,q)$ by twisting $r$ strands $n$ full twists, $K(p,q,r,n)$ is represented by the braid $(\si_{q-1} \si_{q-2} \cdots \si_1)^p (\si_{r-1} \si_{r-2} \cdots \si_1)^{nr}$ in $B_q$. To simplify symbols, we use a modified version of the notation of Garside \cite{gars}, first introduced in \cite{mypaper}.

We use the symbol $\Pi_{s}^l$ to denote $\si_l \si_{l+1} \cdots \si_s$, and we use the symbol $\Delta_s^l$ to denote $\Pi_s^l \Pi_{s-1}^l \cdots \Pi_l^l$. When $l=1$, we drop the superscript, and to avoid confusion with exponents, any exponent will occur outside of parentheses. As in \cite{gars}, if $w$ is a braid word representing a braid in the braid group $B_n$, $\mbox{rev}w$ denotes the braid word obtained by writing the generators appearing in $w$ in the reverse order. For example, in $B_4$, the reverse of $w = \si_1 \si_2 \si_3$ is $\mbox{rev}w = \si_3 \si_2 \si_1$.    

It is a result of Stallings \cite{stallings} that if a braid $\beta$ is \textit{homogeneous}, then the closure of $\beta$ is a fibered link. Here, homogeneous means that every generator of the braid group appears in the word for $\beta$ only with exponents of the same sign. In particular, if a braid can be represented by a word with only positive (or only negative) exponents on the generators, the braid is homogeneous. In Section \ref{fiberedttksection}, we will use Stallings' result to show that a certain class of twisted torus knots are fibered.

\section{Fibered Twisted Torus Knots}\label{fiberedttksection}

In this section, we demonstrate that a specific class of twisted torus knots are fibered. Since the braid representing $K(p,q,r,n)$ is homogeneous for $n>0$, all twisted torus knots $K(p,q,r,n)$ are fibered when $n>0$, so we assume $n<0$. In Theorem \ref{fiberedttks}, we assume $n>0$ and use the symbol $-n$ to be a negative number, so as to simplify calculations.

\begin{theorem}\label{fiberedttks}
The twisted torus knots $K(p,q,r,-n)$, with $n>0$ and $r<q$, are fibered when $nq<p$.
\end{theorem}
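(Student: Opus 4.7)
The plan is to transform the standard braid representative of $K(p,q,r,-n)$ into a positive braid word and then invoke Stallings' theorem. Begin with
\[
\beta=(\si_{q-1}\si_{q-2}\cdots\si_1)^p(\si_{r-1}\si_{r-2}\cdots\si_1)^{-nr}\in B_q,
\]
and recognize that $F_q:=(\si_{q-1}\cdots\si_1)^q$ is the full twist on all $q$ strands and that $F_r:=(\si_{r-1}\cdots\si_1)^r$ is the full twist on the first $r$ strands (a central element of $B_r\subset B_q$). The hypothesis $p-nq>0$ lets me factor
\[
\beta=(\si_{q-1}\cdots\si_1)^{p-nq}\cdot F_q^{\,n}\cdot F_r^{-n},
\]
so the leading block is already positive, and the task reduces to writing $F_q^{\,n}F_r^{-n}$ positively.

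The key is to establish a factorization $F_q=F_r\cdot P$ with $P\in B_q$ a positive braid. I would obtain this from a Garside-style decomposition of the half twist: writing $\Delta_{q-1}=W\cdot\Delta_{r-1}$ with $W=\Pi_{q-1}\Pi_{q-2}\cdots\Pi_r$ positive, squaring gives $F_q=W\Delta_{r-1}W\Delta_{r-1}$, and reshuffling via the braid relations (using that $\Delta_{r-1}$ acts only on the first $r$ strands) turns this into $F_r\cdot P$ with $P$ positive. Centrality of $F_q$ in $B_q$ then forces $P$ and $F_r$ to commute: from $F_qF_r=F_rF_q$ and $F_q=F_rP$, one gets $F_rPF_r=F_r^{\,2}P$, hence $PF_r=F_rP$. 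Using this commutation,
\[
F_q^{\,n}F_r^{-n}=(F_rP)^nF_r^{-n}=F_r^{\,n}P^nF_r^{-n}=P^n,
\]
so $\beta=(\si_{q-1}\cdots\si_1)^{p-nq}\cdot P^n$ is a positive braid word, and its closure is fibered by Stallings' theorem.

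The main obstacle is verifying positivity of $P=F_r^{-1}F_q$ by an explicit calculation. Already for $(q,r)=(3,2)$ one must apply the relation $\si_1^{-1}\si_2\si_1=\si_2\si_1\si_2^{-1}$ (and its rearrangement) twice to reduce $\si_1^{-1}\si_2\si_1^{\,2}\si_2\si_1$ to the positive word $\si_2\si_1^{\,2}\si_2$; the general case is a longer but similar manipulation, and packaging the resulting $P$ cleanly in the paper's $\Pi$--$\Delta$ notation is where most of the effort goes. Once $\beta$ is exhibited as a positive braid, Stallings' criterion delivers fiberedness immediately.
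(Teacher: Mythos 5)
Your overall architecture is sound and, at bottom, is the same as the paper's: everything reduces to exhibiting a positive word for one full twist on $q$ strands against one full twist on the leftmost $r$ strands, and your cancellation of the $n$ negative $r$-strand full twists against the $n$ positive $r$-strand full twists hiding inside $F_q^{\,n}$ is precisely the geometric observation recorded after the paper's proof. Your treatment of general $n$ is actually cleaner than the paper's: once $F_q=F_rP$ with $P$ positive is in hand, centrality of $F_q$ formally yields $PF_r=F_rP$ and hence $F_q^{\,n}F_r^{-n}=P^n$, whereas the paper re-runs an explicit word manipulation (via Lemmas \ref{easylemma} and \ref{L1}) to push each residual $(\Pi_{r-1})^{-1}$ block through $(\Pi_{q-1})^{p}$ one layer at a time.

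The one genuine gap is the positivity of $P=F_r^{-1}F_q$, which you correctly flag as the crux but do not prove, and your proposed route to it is not a routine reshuffle. Writing $\Delta_{q-1}=W\Delta_{r-1}$ and using centrality gives $F_r^{-1}F_q=W\cdot\bigl(\Delta_{r-1}W\Delta_{r-1}^{-1}\bigr)$, so you would want the conjugate $\Delta_{r-1}W\Delta_{r-1}^{-1}$ to be positive; but conjugation by the half twist on the first $r$ strands sends $\si_i\mapsto\si_{r-i}$ for $i<r$ and fixes $\si_j$ for $j>r$, while $\Delta_{r-1}\si_r\Delta_{r-1}^{-1}$ is a band generator that is \emph{not} equal to any positive Artin word (its exponent sum is $1$ yet it is not a generator; already $\si_1\si_2\si_1^{-1}$ in $B_3$ shows this). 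Indeed, for $(q,r)=(3,2)$ one checks that $\Delta_{1}W\Delta_{1}^{-1}=\si_1^2\si_2\si_1^{-1}$ is not a positive braid, so the two copies of $W$ must interact before the negative letters can be absorbed. That absorption is exactly what the paper's Lemma \ref{lemma!}, $\si_l^{-1}\Pi_s^{l+1}\Pi_s^l=\Pi_s^{l+1}\Pi_{s-1}^l$, iterated $r-1$ times, accomplishes, producing the explicit positive word $P=\Pi_{q-1}^{r}\Pi_{q-2}^{r-1}\cdots\Pi_{q-r+1}^{2}\Pi_{q-r}(\Pi_{q-1})^{q-r}$ (your $\si_2\si_1^2\si_2$ is the case $q=3$, $r=2$). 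With that computation supplied, your proof closes; without it, the central claim remains an assertion.
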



The primitive/Seifert twisted torus knots in Theorem 4.1 of \cite{dean} are a special case of the knots in Theorem \ref{fiberedttks} (with $n=1$), so we know that they are also fibered. The proof of Theorem \ref{fiberedttks} leads to an explicit positive braid word for $K(p,q,r,-1)$, when $r <p$ and $r<q$. In Section \ref{bigonesection}, we use this explicit braid representation to provide a mutual generalization of the two main theorems (3.1 and 4.1) of a previous paper written by the author \cite{mypaper}. 

No obvious strengthening of Theorem \ref{fiberedttks} exists, as there is a simple example of a twisted torus knot which is not fibered. Consider the knot $K(4,3,2,-2)$, represented by the closure of the braid $\beta =(\si_2 \si_1)^4 (\si_1)^{-4}$, seen in Figure \ref{twistknot1}. The braid $\beta$ can easily be seen, geometrically, to be equivalent to the braid $(\si_2)^2 (\si_1)^2 \si_2 \si_1^{-1}$. The closure of this braid, seen in Figure \ref{twistknot2}, is isotopic to the 5-crossing twist knot. Since twist knots with 5 or more crossings are not fibered \cite{knots}, $K(4,3,2,-2)$ is not a fibered knot.  

\begin{figure}[ht]
\begin{center}
\includegraphics[scale=.7]{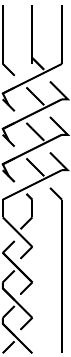} 
\caption{$K(4,3,2,-2)$}\label{twistknot1}
\end{center}
\end{figure}

\begin{figure}[h]
\begin{center}
\includegraphics[scale=.7]{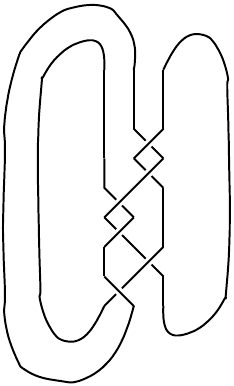} 
\caption{$K(4,3,2,-2)$}\label{twistknot2}
\end{center}
\end{figure}

To prove Theorem \ref{fiberedttks}, we first prove two lemmas about words in the braid group, and state a lemma, introduced in \cite{mypaper}.

\begin{lemma}\label{lemma!}
For $l < s$, $\si_l^{-1} \Pi_{s}^{l+1} \Pi_{s}^{l} = \Pi_{s}^{l+1} \Pi_{s-1}^{l}$
\end{lemma}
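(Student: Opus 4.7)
The plan is to multiply both sides on the left by $\si_l$ and prove the equivalent positive identity
\[
\Pi_s^{l+1} \Pi_s^l = \Pi_s^l \Pi_{s-1}^l,
\]
so that the argument uses only the braid relations $\si_i \si_{i+1} \si_i = \si_{i+1} \si_i \si_{i+1}$ and commutativity $\si_i \si_j = \si_j \si_i$ for $|i-j| \ge 2$, rather than having to track $\si_l^{-1}$ through the computation. The reduction is immediate since $\sigma_l \Pi_s^{l+1} = \Pi_s^l$, so the RHS of the lemma becomes $\sigma_l \Pi_s^{l+1} \Pi_{s-1}^l = \Pi_s^l \Pi_{s-1}^l$ after left-multiplication by $\si_l$.

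I would then induct on $s - l$. For the base case $s = l+1$, the identity $\Pi_{l+1}^{l+1}\Pi_{l+1}^{l} = \Pi_{l+1}^{l}\Pi_l^l$ reads $\si_{l+1}\si_l\si_{l+1} = \si_l\si_{l+1}\si_l$, which is precisely the braid relation.

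For the inductive step I would rewrite the LHS as $\Pi_s^{l+1} \si_l \Pi_s^{l+1}$, peel $\si_{l+1}$ off the left copy of $\Pi_s^{l+1}$, and then slide $\si_l$ past $\Pi_s^{l+2}$ using far-commutativity to obtain $\si_{l+1} \si_l \Pi_s^{l+2} \Pi_s^{l+1}$. The inductive hypothesis applies to $\Pi_s^{l+2} \Pi_s^{l+1}$ (since $s-(l+1) < s-l$) and replaces it with $\Pi_s^{l+1} \Pi_{s-1}^{l+1}$. Peeling another $\si_{l+1}$ off the front of $\Pi_s^{l+1}$ exposes a $\si_{l+1}\si_l\si_{l+1}$ block, to which one application of the braid relation produces $\si_l\si_{l+1}\si_l$; commuting the final $\si_l$ back past $\Pi_s^{l+2}$ and reassembling gives $\Pi_s^l \si_l \Pi_{s-1}^{l+1} = \Pi_s^l \Pi_{s-1}^l$, as required.

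There is no serious obstacle; the only care required is bookkeeping when $s - l = 1$, so that the intermediate subwords $\Pi_s^{l+2}$ are (possibly empty) well-defined products. That case is handled directly in the base, so everything reduces to a clean induction whose atoms are a single braid relation and repeated use of far-commutativity.
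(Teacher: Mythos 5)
Your argument is correct, and it is organized differently from the paper's. You clear the inverse at the outset by left-multiplying by $\si_l$, reducing the lemma to the positive-word identity $\Pi_s^{l+1}\Pi_s^l = \Pi_s^l\Pi_{s-1}^l$, which you then prove by induction on $s-l$, the base case being exactly the braid relation. The paper instead keeps $\si_l^{-1}$ in the word and sweeps it rightward, repeatedly invoking the equivalent form $\si_i^{-1}\si_{i+1}\si_i = \si_{i+1}\si_i\si_{i+1}^{-1}$ of the braid relation together with far-commutativity, until the travelling inverse reaches the end of the word and cancels against the final $\si_s$; the surviving positive letters are then reordered into $\Pi_s^{l+1}\Pi_{s-1}^l$. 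The two proofs rest on the same two relations, so neither is more general, but yours stays entirely inside the positive braid monoid and packages the bookkeeping into a clean induction whose only nontrivial step is one braid relation plus two far-commutations, whereas the paper's one-pass rewriting more closely mirrors how the lemma is used later in the article (pushing a stray $\si_i^{-1}$ through a block of positive generators). Your handling of the edge case is also fine: the subword $\Pi_s^{l+2}$ degenerates only when $s=l+1$, which is precisely the base case, and the inductive hypothesis is applied at parameter $s-(l+1)<s-l$ as required.
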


\begin{proof}
First, we note that the relation of the braid group $\si_i \si_{i+1} \si_i = \si_{i+1} \si_i \si_{i+1}$ is equivalent to $\si_{i}^{-1} \si_{i+1} \si_{i} = \si_{i+1} \si_i \si_{i+1}^{-1}$. The proof uses this relation repeatedly, as well as the commutativity properties of non-adjacent generators of the braid group.
\begin{eqnarray*}
\si_l^{-1} \Pi_s^{l+1} \Pi_{s}^{l} & = & \si_l^{-1} \si_{l+1} \si_{l+2} \cdots \si_s \si_{l} \si_{l+1} \cdots \si_s\\
                      & = &  \si_l^{-1} \si_{l+1} \si_l \si_{l+2} \cdots \si_s \si_{l+1} \si_{l+2} \cdots \si_s \\
                      & = &  \si_{l+1} \si_l \si_{l+1}^{-1} \si_{l+2} \cdots \si_s \si_{l+1} \si_{l+2} \cdots \si_s \\
                      & = &  \si_{l+1} \si_l \si_{l+1}^{-1} \si_{l+2} \si_{l+1} \si_{l+3} \cdots \si_s \si_{l+2} \si_{l+3} \cdots \si_s \\
                       & = &  \si_{l+1} \si_l \si_{l+2} \si_{l+1} \si_{l+2}^{-1} \si_{l+3} \cdots \si_s \si_{l+2} \si_{l+3} \cdots \si_s\\     
                      & = & \si_{l+1} \si_l \si_{l+2} \si_{l+1} \si_{l+3} \si_{l+2} \cdots \si_{s} \si_{s-1} \si_s^{-1} \si_s \\
                      & = & \si_{l+1} \si_l \si_{l+2} \si_{l+1} \si_{l+3} \si_{l+2} \cdots \si_{s} \si_{s-1}
\end{eqnarray*}
By the commutativity properties of non-adjacent generators of the braid group, we can rewrite this element as $\si_{l+1} \cdots \si_{s} \si_{l} \cdots \si_{s-1}$, which is $\Pi_s^{l+1} \Pi_{s-1}^l$.
\end{proof}

\begin{lemma}\label{easylemma}
For $l<s$, $\si_l^{-1} \Pi_s^{l+1} \Pi_{s-1}^l = \Pi_s^{l+1} \Pi_{s-1}^l \si_s^{-1}$.
\end{lemma}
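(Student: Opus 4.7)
The plan is to reduce this identity to Lemma \ref{lemma!} by a single algebraic manipulation, rather than repeating a braid-relation calculation of the same flavor as in the previous proof. The key observation is that the product $\Pi_s^l = \si_l \si_{l+1} \cdots \si_{s-1} \si_s$ ends in $\si_s$, so we have the trivial identity $\Pi_{s-1}^l = \Pi_s^l \, \si_s^{-1}$.

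Given this, I would start from the left-hand side $\si_l^{-1} \Pi_s^{l+1} \Pi_{s-1}^l$ and substitute $\Pi_{s-1}^l = \Pi_s^l \si_s^{-1}$ to rewrite it as $\si_l^{-1} \Pi_s^{l+1} \Pi_s^l \, \si_s^{-1}$. The $\si_s^{-1}$ at the right end does not interact with anything to its left, so Lemma \ref{lemma!} applies directly to the prefix $\si_l^{-1} \Pi_s^{l+1} \Pi_s^l$, replacing it with $\Pi_s^{l+1} \Pi_{s-1}^l$. The result is exactly $\Pi_s^{l+1} \Pi_{s-1}^l \si_s^{-1}$, which is the right-hand side.

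There is no real obstacle here: the argument is two substitutions, one of which is a trivial factoring of $\si_s$ off the end of $\Pi_s^l$ and the other of which is a citation of Lemma \ref{lemma!}. The only small thing to verify is that the hypothesis $l < s$ of Lemma \ref{lemma!} carries over, which it does since the hypothesis of the present lemma is identical. Thus the proof should be a three-line display rather than a calculation comparable in length to the proof of Lemma \ref{lemma!}.
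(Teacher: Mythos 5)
Your proof is correct and is essentially identical to the paper's own argument: the paper also rewrites $\si_l^{-1} \Pi_s^{l+1} \Pi_{s-1}^l$ as $\si_l^{-1} \Pi_s^{l+1} \Pi_{s}^l \si_s^{-1}$ and then invokes Lemma \ref{lemma!}. Nothing is missing.
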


To prove Lemma \ref{easylemma} rewrite $\si_l^{-1} \Pi_s^{l+1} \Pi_{s-1}^l$ as $\si_l^{-1} \Pi_s^{l+1} \Pi_{s}^l \si_s^{-1}$. Lemma \ref{lemma!} completes the proof.

\begin{lemma}\label{L1}
For $l <  t \leq s$,  $\si_t  \Pi_s^l = \Pi_s^l \si_{t-1}$.
\end{lemma}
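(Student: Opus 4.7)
The plan is to massage $\sigma_t \Pi_s^l = \sigma_t \sigma_l \sigma_{l+1} \cdots \sigma_s$ by using only two facts about $B_n$: the far-commutativity $\sigma_i \sigma_j = \sigma_j \sigma_i$ when $|i-j| \ge 2$, and the braid relation $\sigma_t \sigma_{t-1} \sigma_t = \sigma_{t-1} \sigma_t \sigma_{t-1}$. The target $\Pi_s^l \sigma_{t-1}$ differs from the starting word by moving one generator from the left end to the right end and lowering its index by one, so a transport-across-the-word argument is natural.

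First, because $t \ge l+1$, the index $t$ differs from each of $l, l+1, \ldots, t-2$ by at least $2$, so $\sigma_t$ commutes with all of them and I can slide it rightward to obtain
\[
\sigma_t \Pi_s^l \;=\; \sigma_l \sigma_{l+1} \cdots \sigma_{t-2}\,\sigma_t \sigma_{t-1} \sigma_t\, \sigma_{t+1} \cdots \sigma_s.
\]
(If $t=l+1$ this initial block is empty; if $t=s$ the final block is empty; both are harmless.) Next, applying the braid relation to the central triple gives
\[
\sigma_l \sigma_{l+1} \cdots \sigma_{t-2}\,\sigma_{t-1} \sigma_t \sigma_{t-1}\, \sigma_{t+1} \cdots \sigma_s.
\]
Finally, the trailing $\sigma_{t-1}$ commutes with each of $\sigma_{t+1}, \sigma_{t+2}, \ldots, \sigma_s$ (again by the $|i-j|\ge 2$ rule), so I can push it all the way to the right, leaving
\[
\sigma_l \sigma_{l+1} \cdots \sigma_{t-2}\,\sigma_{t-1} \sigma_t \sigma_{t+1} \cdots \sigma_s\,\sigma_{t-1} \;=\; \Pi_s^l \sigma_{t-1}.
\]

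There is no real obstacle: the calculation is three lines of braid moves, with the hypothesis $l < t \le s$ used exactly to ensure that (i) $\sigma_{t-1}$ actually appears in $\Pi_s^l$ so the braid relation is available, and (ii) all of the sliding steps involve indices differing by at least $2$. The only thing worth checking is the boundary behavior at $t=l+1$ and $t=s$, both of which fall out of the same computation with one of the flanking commuting blocks degenerating to the empty word.
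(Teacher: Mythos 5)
Your proof is correct, and it uses exactly the approach the paper indicates (the paper defers the proof of this lemma to \cite{mypaper}, noting only that it ``uses relations of the braid group''): slide $\si_t$ rightward by far-commutativity until it meets $\si_{t-1}\si_t$, apply the braid relation, and slide the resulting $\si_{t-1}$ out to the right. Your handling of the boundary cases $t=l+1$ and $t=s$ is also fine.
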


The proof of Lemma \ref{L1}, which uses relations of the braid group, can be found in \cite{mypaper}.

\begin{proof}[Proof of Theorem \ref{fiberedttks}.]
We first prove the theorem in the special case $n=1$.
Because $K(p,q,r,-1)$ is a twisted torus knot with $r<p$ and $r<q$, we can think of it as the closure of the braid on $q$ strands, shown in Figure \ref{genttk}. In this picture, each box represents the indicated number of single (non-full) twists on the strands that enter that box. Here, a box with 1 in it would be represented by a braid in which the right-most strand crosses over each of the other strands once, in order from right to left. 

Using the modified Garside notation, explained above, we can write this braid as $(\mbox{rev}\Pi_{q-1})^p (\mbox{rev}\Pi_{r-1})^{-r}$. The goal in this proof is to show that the braid representing $K$ has an expression as a homogeneous word (in this case, a positive word). For any $\gamma \in B_n$, it is easy to see that $\mbox{rev}\gamma$ is homogeneous if and only if $\gamma$ is homogeneous. Hence we work instead with $\beta = $rev$((\mbox{rev}\Pi_{q-1})^p (\mbox{rev}\Pi_{r-1})^{-r}) = (\Pi_{r-1})^{-r} (\Pi_{q-1})^p$. The proof is now an application of Lemma \ref{lemma!} and the properties of the braid group.

\begin{figure}[ht]
\begin{center}
\includegraphics[scale=.65]{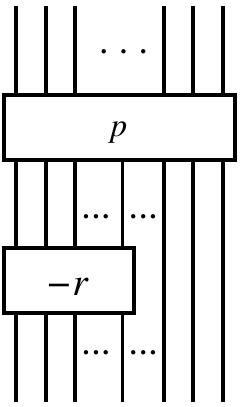} 
\caption{$K(p,q,r,-1)$}\label{genttk}
\end{center}
\end{figure}

For the first step, observe that $(\Pi_{r-1})^{-1} \Pi_{q-1} = \Pi_{q-1}^r$. Using this relation changes $\beta$ to
$(\Pi_{r-1})^{-r+1}  \Pi_{q-1}^{r} (\Pi_{q-1})^{p-1}$. Since the goal is to show $\beta$ is positive, we examine how the negative powers of $\Pi_{r-1}$ interact with the rest of the word; begin by stripping away powers of $(\Pi_{r-1})^{-1}$, one at a time. We have $\beta = (\Pi_{r-1})^{-r+2} (\Pi_{r-1})^{-1}  \Pi_{q-1}^{r} (\Pi_{q-1})^{p-1}$. Using the commutativity properties of non-adjacent generators of the braid group, $\beta$ can be written as $(\Pi_{r-1})^{-r+2} \si_{r-1}^{-1} \Pi_{q-1}^{r} (\Pi_{r-2})^{-1} (\Pi_{q-1})^{p-1}.$ Because $(\Pi_{r-2})^{-1} \Pi_{q-1} = \Pi_{q-1}^{r-1}$, $\beta$ becomes $(\Pi_{r-1})^{-r+2} \si_{r-1}^{-1} \Pi_{q-1}^{r}\Pi_{q-1}^{r-1} (\Pi_{q-1})^{p-2}$. Now Lemma \ref{lemma!} tells us that this is $(\Pi_{r-1})^{-r+2} \Pi_{q-1}^{r} \Pi_{q-2}^{r-1} (\Pi_{q-1})^{p-2}$.

Now strip away another copy of $(\Pi_{r-1})^{-1}$ and another copy of $\Pi_{q-1}$ to obtain the word $(\Pi_{r-1})^{-r+3} (\Pi_{r-1})^{-1} \Pi_{q-1}^{r} \Pi_{q-2}^{r-1} \Pi_{q-1} (\Pi_{q-1})^{p-3}$. We repeat the previous process. This time, we can push all but one of the elements of $(\Pi_{r-1})^{-1}$ past $\Pi_{q-1}^{r}$ and all but one of the elements of $(\Pi_{r-2})^{-1}$ past $\Pi_{q-2}^{r-1}$. Then the right hand side of the equation becomes:          
\begin{eqnarray*}             
    (\Pi_{r-1})^{-r} (\Pi_{q-1})^p & = & (\Pi_{r-1})^{-r+3} \si_{r-1}^{-1} \Pi_{q-1}^{r} (\Pi_{r-2})^{-1} \Pi_{q-2}^{r-1} \Pi_{q-1} (\Pi_{q-1})^{p-3}\\
                      & = & (\Pi_{r-1})^{-r+3} \si_{r-1}^{-1} \Pi_{q-1}^{r} \si_{r-2}^{-1} \Pi_{q-2}^{r-1} (\Pi_{r-3})^{-1} \Pi_{q-1}(\Pi_{q-1})^{p-3}
\end{eqnarray*}    

From here, $\beta$ is written as $(\Pi_{r-1})^{-r+3} \si_{r-1}^{-1} \Pi_{q-1}^{r} \si_{r-2}^{-1} \Pi_{q-2}^{r-1} \Pi_{q-1}^{r-2}(\Pi_{q-1})^{p-3}$. For each $\si_i^{-1}$ appearing alone in the word, we will use Lemma \ref{lemma!}. Note, however that in order to use the lemma, we will need to rewrite $ \si_{r-2}^{-1} \Pi_{q-2}^{r-1} \Pi_{q-1}^{r-2}$ as $ \si_{r-2}^{-1} \Pi_{q-2}^{r-1} \Pi_{q-2}^{r-2} \si_{q-1}$.                 
Then $\beta = (\Pi_{r-1})^{-r+3} \si_{r-1}^{-1} \Pi_{q-1}^{r} \Pi_{q-2}^{r-1} \Pi_{q-3}^{r-2} \si_{q-1}(\Pi_{q-1})^{p-3}$. We push $\si_{q-1}$ to the left, past all elements with which it commutes, to obtain $(\Pi_{r-1})^{-r+3} \si_{r-1}^{-1} \Pi_{q-1}^{r} \Pi_{q-2}^{r-1} \si_{q-1}\Pi_{q-3}^{r-2} (\Pi_{q-1})^{p-3} $ which is easily seen to be the same as $(\Pi_{r-1})^{-r+3} \si_{r-1}^{-1} \Pi_{q-1}^{r} \Pi_{q-1}^{r-1}\Pi_{q-3}^{r-2} (\Pi_{q-1})^{p-3}$. Again, Lemma \ref{lemma!} tells us that we can exchange this word for $(\Pi_{r-1})^{-r+3} \Pi_{q-1}^{r} \Pi_{q-2}^{r-1}\Pi_{q-3}^{r-2} (\Pi_{q-1})^{p-3}$.

This process will repeat with each $(\Pi_{r-1})^{-1}$ we move toward the grouping of $\Pi_{q-1}$, so that after we have moved $l$ copies of $(\Pi_{r-1})^{-1}$, $\beta$ can be written as $ (\Pi_{r-1})^{-r+l}  \Pi_{q-1}^{r} \Pi_{q-2}^{r-1} \cdots \Pi_{q-l}^{r-l+1} (\Pi_{q-1})^{p-l}$. When $l = r-1$, we get  $(\Pi_{r-1})^{-1}  \Pi_{q-1}^{r} \Pi_{q-2}^{r-1} \cdots \Pi_{q-r+1}^{2} (\Pi_{q-1})^{p-r+1}$. After sliding each $\si_i$ in the final copy of $(\Pi_{r-1})^{-1}$ to the right as far as possible, $\beta$ is rewritten as the word $ \si_{r-1}^{-1}  \Pi_{q-1}^{r} \si_{r-2}^{-1} \Pi_{q-2}^{r-1} \cdots \si_1^{-1} \Pi_{q-r+1}^{2} (\Pi_{q-1})^{p-r}$. At this point, we want to use Lemma \ref{lemma!}, but the word is not quite in the right form to do so, so we pull off one copy of $\Pi_{q-1}$ of the $p-r$ copies available and use $\Pi_{q-1} = \Pi_{q-r+1} \Pi_{q-1}^{q-r+2}$ to obtain $\beta=  \si_{r-1}^{-1}  \Pi_{q-1}^{r} \si_{r-2}^{-1} \Pi_{q-2}^{r-1} \cdots \si_1^{-1} \Pi_{q-r+1}^{2} \Pi_{q-r+1} \Pi_{q-1}^{q-r+2} (\Pi_{q-1})^{p-r}$.

Now repeating the same process as above for the final time, we use Lemma \ref{lemma!} and the commutativity properties of non-adjacent generators of $B_q$ to get the following sequence of equivalent words for $\beta$. \begin{eqnarray*}  
\beta & = & \si_{r-1}^{-1}  \Pi_{q-1}^{r} \si_{r-2}^{-1} \Pi_{q-2}^{r-1} \cdots \si_2^{-1} \Pi_{q-r+2}^3  \Pi_{q-r+1}^{2} \Pi_{q-r} \Pi_{q-1}^{q-r+2} (\Pi_{q-1})^{p-r} \\
& = & \si_{r-1}^{-1}  \Pi_{q-1}^{r} \si_{r-2}^{-1} \Pi_{q-2}^{r-1} \cdots \si_2^{-1} \Pi_{q-r+2}^3  \Pi_{q-r+1}^{2} \si_{q-r+2} \Pi_{q-r} \Pi_{q-1}^{q-r+3} (\Pi_{q-1})^{p-r}\\
& = & \si_{r-1}^{-1}  \Pi_{q-1}^{r} \si_{r-2}^{-1} \Pi_{q-2}^{r-1} \cdots \si_2^{-1} \Pi_{q-r+2}^3  \Pi_{q-r+2}^{2} \Pi_{q-r} \Pi_{q-1}^{q-r+3} (\Pi_{q-1})^{p-r} \\
& = &  \si_{r-1}^{-1}  \Pi_{q-1}^{r} \si_{r-2}^{-1} \Pi_{q-2}^{r-1} \cdots \Pi_{q-r+2}^3  \Pi_{q-r+1}^{2} \Pi_{q-r} \Pi_{q-1}^{q-r+3} (\Pi_{q-1})^{p-r}  \\
& = &  \Pi_{q-1}^{r} \Pi_{q-2}^{r-1} \cdots \Pi_{q-r+1}^2  \Pi_{q-r} (\Pi_{q-1})^{p-r}
\end{eqnarray*} This braid word is positive, so its closure, $K(p,q,r,-1)$, is a fibered knot.

We will build the proof in the case $n>1$ on the proof in the special case $n=1$.
As noted above, the twisted torus knot $K(p,q,r,-n)$ is obtained from the $(p,q)$-torus knot by twisting $r$ consecutive strands $-n$ full twists. In the braid group, $K(p,q,r,-n)$ is represented by $(\mbox{rev}\Pi_{q-1})^p (\mbox{rev} \Pi_{r-1})^{-nr}$. Again, because taking the reverse word will not change whether the knot is homogeneous, we instead consider the braid $\beta =( \Pi_{r-1})^{-nr} (\Pi_{q-1})^p$. As a starting point, we use the braid word from the special case $n=1$ above, $(\Pi_{r-1})^{-r} (\Pi_{q-1})^p = \Pi_{q-1}^{r} \Pi_{q-2}^{r-1} \cdots \Pi_{q-r+1}^2  \Pi_{q-r} (\Pi_{q-1})^{p-r}$, to rewrite $\beta$ as the word $ ( \Pi_{r-1})^{-nr+r} \Pi_{q-1}^{r} \Pi_{q-2}^{r-1} \cdots \Pi_{q-r+1}^2  \Pi_{q-r}(\Pi_{q-1})^{p-r}$. 

Consider the subword $\gamma = ( \Pi_{r-1})^{-1} \Pi_{q-1}^{r} \Pi_{q-2}^{r-1} \cdots \Pi_{q-r+1}^2  \Pi_{q-r}$ of $\beta$. Write $\gamma$ so that the generators that make up the $(\Pi_{r-1})^{-1}$ are pushed to the right as far as possible, i.e. as $\gamma = \si_{r-1}^{-1} \Pi_{q-1}^{r} \si_{r-2}^{-1} \Pi_{q-2}^{r-1} \cdots \si_1^{-1} \Pi_{q-r+1}^2  \Pi_{q-r}$. By Lemma \ref{easylemma}, $\gamma = \Pi_{q-1}^{r}  \Pi_{q-2}^{r-1} \si_{q-1}^{-1} \cdots  \Pi_{q-r+1}^2  \si_{q-r+2}^{-1} \Pi_{q-r} \si_{q-r+1}^{-1}$. Now we push the generators to the right so that $\gamma= \Pi_{q-1}^{r}  \Pi_{q-2}^{r-1}  \cdots  \Pi_{q-r+1}^2 \Pi_{q-r} \si_{q-1}^{-1} \si_{q-r+2}^{-1} \cdots \si_{q-r+1}^{-1}$, which can be rewritten as  $\Pi_{q-1}^{r}  \Pi_{q-2}^{r-1}  \cdots  \Pi_{q-r+1}^2 \Pi_{q-r} (\Pi_{q-1}^{q-r+1})^{-1}$.

We can push each $(\Pi_{r-1})^{-1}$ past the $\Pi_{q-1}^{r} \Pi_{q-2}^{r-1} \cdots \Pi_{q-r+1}^2  \Pi_{q-r}$ section of $\beta$, so that $\beta =  \Pi_{q-1}^{r} \Pi_{q-2}^{r-1} \cdots \Pi_{q-r+1}^2  \Pi_{q-r}(\Pi_{q-1}^{q-r+1})^{-nr+r}(\Pi_{q-1})^{p-r}$. From Lemma \ref{L1}, $\si_t^{-1} \Pi_s^l = \Pi_s^l \si_{t-1}^{-1}$ for $l<t \le s$, so we have the equation $(\Pi_{q-1}^{q-r+1})^{-1} (\Pi_{q-1})^{l} = (\Pi_{q-1})^{l} (\Pi_{q-1-l}^{q-r+1-l})^{-1}$. In particular, if $l = q-r$, the right hand side of this equation is $(\Pi_{q-1})^{q-r} (\Pi_{r-1})^{-1}$, so we can rewrite the representative of $\beta$ as $\Pi_{q-1}^{r} \Pi_{q-2}^{r-1} \cdots \Pi_{q-r+1}^2  \Pi_{q-r}(\Pi_{q-1})^{q-r} (\Pi_{r-1})^{-nr+r} (\Pi_{q-1})^{p-q}$. The proof in the special case $n=1$ prescribes how $(\Pi_{r-1})^{-nr+r}$ and $(\Pi_{q-1})^{p-q}$ interact; when the two expressions are adjacent, $(\Pi_{r-1})^{-nr+r} (\Pi_{q-1})^{p-q}$, we can replace the word with $\Pi_{q-1}^{r} \Pi_{q-2}^{r-1} \cdots \Pi_{q-r+1}^2 \Pi_{q-r} (\Pi_{q-1}^{q-r+1})^{-nr+2r} (\Pi_{q-1})^{p-q-r}$. 

After repeating this process $l$ times, $\beta$ becomes $(\Pi_{r-1})^{-nr+r} (\Pi_{q-1})^{p-q} = \left(\Pi_{q-1}^{r} \Pi_{q-2}^{r-1} \cdots \Pi_{q-r+1}^2 \Pi_{q-r} (\Pi_{q-1})^{q-r}\right)^l (\Pi_{r-1})^{-nr+lr} (\Pi_{q-1})^{p-lq}$. When $p>nq$, we can set $l=n$, so $\beta = \left(\Pi_{q-1}^{r} \Pi_{q-2}^{r-1} \cdots \Pi_{q-r+1}^2 \Pi_{q-r} (\Pi_{q-1})^{q-r}\right)^n (\Pi_{q-1})^{p-nq}$. This braid is positive, so the closure of $\beta$, $K(p,q,r,-n)$, is a fibered knot.
\end{proof}

As the reviewer pointed out, one can see this theorem in a more geometric manner. A full twist on a braid with $q$ strands can be considered a little more simply in the following way. First the leftmost $r$ strands of the braid, where $1\le r \le p-1$, pass under the rightmost $q-r$ strands and then back over. The leftmost $r$ strands will still be on the left and the rightmost $q-r$ strands will still be on the right. We then twist the leftmost $r$ strands one full twist and twist the rightmost $q-r$ strands one full twist. Figure \ref{full twist} shows the full twist on 5 strands in the case $r=2$.

\begin{figure}[ht]
\begin{center}
\includegraphics[scale=.35]{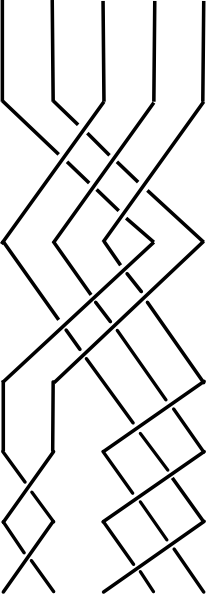}
\caption{A full twist on 5 strands}\label{full twist}
\end{center}
\end{figure}

Since we are considering $K(p,q,r, -n)$ with $n>0$, $r<q$ and $nq<p$, we are considering a braid in $B_q$ with at least $n$ full twists on all $q$ strands. If we express each full twist on all $q$ strands in the manner described above, the braid will have $r$ strands that pass over and under the rightmost $q-r$ strands but do not otherwise interact with those $q-r$ strands. Additionally, each of the $n$ full twists on all $q$ strands will contribute a full twist on those $r$ strands and after the $n$ full twists are completed, those $r$ strands will again be the leftmost $r$ strands. This means each of the $-n$ full twists on the leftmost $r$ strands prescribed by the definition of $K(p,q,r, -n)$ will easily cancel with one of the $+n$ full twists on the leftmost $r$ strands coming from the full twist on all $q$ strands. The example $K(12,5,2,-2)$ is show in Figure \ref{12-5-2-neg2}. Since the only negative crossings in the braid are in the $-n$ full twists on the leftmost $r$ strands, all of the negative crossings will disappear with this cancellation. Hence the the knot $K(p,q,r,-n)$ has a positive braid representative when $n>0$, $r<q$ and $nq<p$, so the knot is fibered.

As mentioned above, there is a simple example that shows that $K(p, q,r,-n)$ is not fibered for all values of the parameters $p$, $q$, $r$ and $n$. However, a more general statement about fibered twisted torus knots than the one given here is desirable.

\section{P/p-p/S and p/S-p/S knots}\label{bigonesection}

Primitive/primitive and primitive/Seifert knots, introduced by Berge \cite{berge} and Dean \cite{dean}, lie on the standard genus 2 Heegaard surface for $S^3$, so they have a natural associated slope, called the surface slope.

\begin{definition}
Let $N$ be a tubular neighborhood of $K$ in $S^3$. The surface slope of $K$ in $F$ is the isotopy class in $\partial N$ of one component of $\partial N \cap F$. 
\end{definition} 

Berge recognized that the primitive/primitive knots have lens space surgeries at the surface slope. Work of Dean \cite{dean} and Eudave-Mu\~noz \cite{eudavemunoz} shows that surgery on a primitive/Seifert knot at the surface slope is one of two types of Seifert fibered space, $S^2(a, b, c)$ or $\mathbb{RP}^2(d,e)$, or the connected sum of two lens spaces. Eudave-Mu\~noz showed that the reducible case can only arise from surgery on a nonhyperbolic knot if the knot is strongly invertible \cite{emreducible}. The exceptional surgeries the primitive/Seifert knots admit make them an especially interesting family to study. 

We provide here an explanation of what primitive/primitive and primitive/Seifert knots are. Consider a simple closed curve $K$ lying in the standard genus 2 Heegaard surface $F$ for $S^3$, and let $H$ and $H'$ be the handlebodies bounded by $F$. The process of 2-handle addition to $H$ along $K$, denoted $H[K]$, 

\begin{figure}[H]
\begin{center}
\includegraphics[scale=.30]{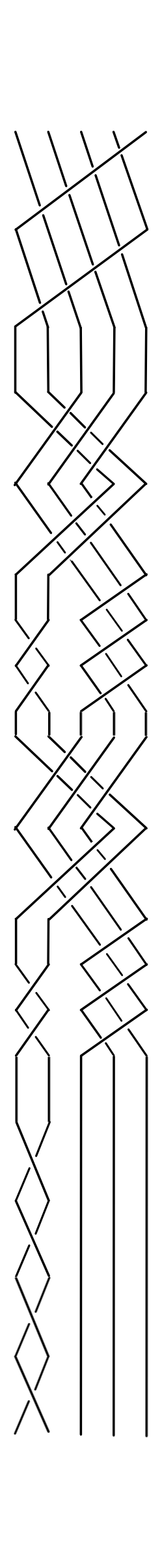}
\caption{$K(12,5,2,-2)$}\label{12-5-2-neg2}
\end{center}
\end{figure}

\noindent is gluing a $D^2\times I$ to $H$ so that $(\partial D^2) \times I$ is identified with a neighborhood of $K$ in $\partial H$.

\begin{definition}
$K$ is called \textit{primitive with respect to $H$} if $H[K]$ is homeomorphic to a solid torus.  
\end{definition}

\begin{definition}
$K$ is called \textit{Seifert with respect to $H$} if $H[K]$ is homeomorphic to a Seifert fibered space.
\end{definition}

We can also say $K$ is primitive with respect to $H'$ and Seifert with respect to $H'$ if $H'[K]$ is a solid torus and a Seifert fibered space, respectively. This allows us to define primitive/primitive and primitive/Seifert.

\begin{definition}
$K$ is called \textit{primitive/primitive with respect to $F$} if both $H[K]$ and $H'[K]$ are homeomorphic to a solid torus.
\end{definition}

\begin{definition}
$K$ is called \textit{primitive/Seifert with respect to $F$} if one of $H[K]$ and $H'[K]$ is homeomorphic to a solid torus and the other is homeomorphic to a Seifert fibered space.
\end{definition}

We often drop the phrase ``with respect to $F$" and refer to the knots as primitive/primitive and primitive/Seifert.

In unpublished work, Berge has shown that if a knot $K$ has two primitive/primitive representatives, $K_1$ and $K_2$, with the same surface slope, then there is a homeomorphism of $S^3$ sending the pair $(F, K_1)$ to the pair $(F, K_2)$. That is, if a given knot $K$ has a primitive/primitive representative $K_1$ with surface slope $s$, $K_1$ is the only representative of that knot with surface slope $s$. It contrast, the author has shown, in \cite{mypaper}, that there are knots which have distinct primitive/Seifert representatives with the same surface slope. Here we call such knots p/S-p/S. Additionally, \cite{mypaper} demonstrates that there are knots that have a primitive/Seifert representative and primitive/primitive representative so that both representatives have the same surface slope. We call these p/p-p/S. In fact, the two families of p/S-p/S and p/p-p/S knots found in \cite{mypaper} are part of the same phenomenon and can be considered special cases of the following theorem.

\begin{theorem}\label{thebigone}
The knots $K_1= K(kq+m, q, m, -1)$ and $K_2=K(kq +q-m, q, q-m, -1)$, where $q\ge 2$, $k \ge 2$, $1\le m \le q-1$ and $(q,m)=1$, are isotopic as knots in $S^3$ and have the same surface slope with respect to $F$, but there is
no homeomorphism of $S^3$ sending the pair $( F, K_1)$ to $( F, K_2)$
\end{theorem}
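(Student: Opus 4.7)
The plan is to verify the three assertions of Theorem \ref{thebigone} in turn: that $K_1\cong K_2$ as knots in $S^3$; that $K_1$ and $K_2$ share the same surface slope on $F$; and that no homeomorphism of $S^3$ sends $(F,K_1)$ to $(F,K_2)$. The main leverage for the first two items comes from the explicit positive braid representative for $K(p,q,r,-1)$ that drops out of the proof of Theorem \ref{fiberedttks}; the last item is where I expect the real work.

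For the isotopy, write $A_r = \Pi_{q-1}^{r}\Pi_{q-2}^{r-1}\cdots\Pi_{q-r+1}^{2}\Pi_{q-r}$, so that by Theorem \ref{fiberedttks} the knots $K_1$ and $K_2$ are represented in $B_q$ by $\beta_1=A_m(\Pi_{q-1})^{kq}$ and $\beta_2=A_{q-m}(\Pi_{q-1})^{kq}$. Because $(\Pi_{q-1})^q=\De^2$ is the full twist, the tail $(\Pi_{q-1})^{kq}=\De^{2k}$ is central in $B_q$. Let $\phi$ be the automorphism of $B_q$ defined by $\phi(\si_i)=\si_{q-i}$; equivalently, $\phi$ is inner conjugation by $\De$, so $\phi$ fixes the central factor $\De^{2k}$. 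A direct manipulation of $A_m$ using the braid relations and the commutativity properties exploited in Section \ref{fiberedttksection} will show $\phi(A_m)=A_{q-m}$, whence $\phi(\beta_1)=\beta_2$. Thus $\beta_1$ and $\beta_2$ are conjugate in $B_q$ and their closures are isotopic, so $K_1\cong K_2$ in $S^3$. For the surface slope I would use the standard formula that the surface slope of $K(p,q,r,n)$ on the genus two Heegaard surface is $pq+nr^2$; substitution yields $(kq+m)q-m^2=kq^2+mq-m^2$ for $K_1$ and $(kq+q-m)q-(q-m)^2=kq^2+mq-m^2$ for $K_2$, which coincide.

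The main obstacle is showing that no homeomorphism of $S^3$ realizes $(F,K_1)\to(F,K_2)$. I would argue by contradiction: such a homeomorphism $h$ would, after possibly swapping the roles of $H$ and $H'$, induce a homeomorphism between the pair of $2$-handle additions $\{H[K_1],H'[K_1]\}$ and $\{H[K_2],H'[K_2]\}$. In the parameter ranges where exactly one of $K_1,K_2$ is primitive/primitive and the other primitive/Seifert, no such $h$ can exist: a primitive/primitive position produces two solid tori, whereas the primitive/Seifert side produces a Seifert fibered space that is not a solid torus. In the remaining range, where both $K_i$ are primitive/Seifert, the Seifert invariants of the Seifert fibered handlebody are determined from the triples $(kq+m,q,m)$ and $(kq+q-m,q,q-m)$ via Dean's construction, and I would verify that these invariants differ whenever $m\ne q-m$. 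Carrying out this invariant comparison is the heart of the argument, and I would handle it by adapting the distinguishing computations from \cite{mypaper} to both cases simultaneously.
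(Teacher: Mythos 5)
Your overall strategy tracks the paper's: both arguments feed the positive braid words $A_m(\Pi_{q-1})^{kq}$ and $A_{q-m}(\Pi_{q-1})^{kq}$ from Theorem \ref{fiberedttks} into a conjugacy argument in $B_q$, both get the common slope $kq^2+mq-m^2$ from Dean's formula $pq+nr^2$, and both rule out a homeomorphism of pairs by comparing the $2$-handle additions $H[K_i]$ and $H'[K_i]$. Where you genuinely diverge is the conjugacy step. The paper exhibits an explicit conjugator $\gamma=\mbox{rev}\Delta_{m-1}\,\mbox{rev}\Delta_{q-1}^{m+1}$ and spends most of the proof verifying, generator by generator, that $\gamma\beta_1$ and $\beta_2\gamma$ both reduce to the half twist $\Delta_{q-1}$; you conjugate by the half twist itself via the inner automorphism $\phi(\si_i)=\si_{q-i}$. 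These are secretly the same conjugation: the paper's identities give $\beta_1=\gamma^{-1}\Delta_{q-1}$ and $\beta_2=\Delta_{q-1}\gamma^{-1}$, hence $\Delta_{q-1}\beta_1\Delta_{q-1}^{-1}=\beta_2$, so your conjugator and the paper's differ by $\beta_1$ itself. Your framing is cleaner, but the identity $\phi(A_m)=A_{q-m}$, which you leave as ``a direct manipulation will show,'' is exactly where the paper's braid calculation lives and cannot be waved at. It is true, and the quickest way to close it is to note that $A_r$ is a positive word of length $r(q-r)$ whose underlying permutation is the block transposition exchanging the first $r$ strands with the last $q-r$, so $A_r$ is the permutation braid of that block swap; since conjugation by $\Delta_{q-1}$ sends the permutation braid of $\pi$ to that of $\rho\pi\rho$ ($\rho$ the order-reversing permutation), it carries the $(r,q-r)$ block swap to the $(q-r,r)$ one, i.e.\ $A_m$ to $A_{q-m}$. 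Likewise, your final step defers the decisive computation to ``adapting \cite{mypaper}''; the paper does it concretely --- for $m>1$, $K_1$ is $(k,m)$-Seifert and $K_2$ is $(k,q-m)$-Seifert with respect to $H$ while both are primitive with respect to $H'$, so a homeomorphism of pairs would force $D^2(k,m)\cong D^2(k,q-m)$, impossible once $m<q-m$; and for $m=1$ one compares a solid torus with $D^2(k,q-1)$. Supply those two verifications and your argument is complete and, in the conjugacy step, arguably tidier than the original.
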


Theorem 3.1 of \cite{mypaper} is a special case of this theorem, where $q\ge5$ is odd and $m=\frac{q-1}{2}$, and Theorem 4.1 of \cite{mypaper} is the special case $m=1$. The proofs of those theorems employ distinct methods which are difficult to integrate. Here, we use the result of the braid calculation from Theorem \ref{fiberedttks} above. Dean's work \cite{dean} on the overlap between twisted torus knots and primitive/Seifert knots tells us that the knots described in this theorem are primitive/Seifert for $m>1$.

\begin{proof}[Proof of Theorem \ref{thebigone}]
By interchanging $m$ and $q-m$ if necessary, we may assume that $m<q/2$.
Because $K_1$ and $K_2$ are twisted torus knots, they are closures of braids in $B_q$,  represented by $\beta_1 = (\mbox{rev} \Pi_{q-1})^{kq+m} (\mbox{rev} \Pi_{m-1})^{-m}$ and $\beta_2 = (\mbox{rev}\Pi_{q-1})^{kq+q-m} (\mbox{rev} \Pi_{q-m-1})^{-q+m}$, respectively. We will show that $\beta_1$ and $\beta_2$ are conjugate in $B_q$ by $\gamma =\mbox{rev} \Delta_{m-1} \mbox{rev} \Delta_{q-1}^{m+1}$, i.e.  the braid $\beta_1 \gamma$, represented by $(\mbox{rev} \Pi_{q-1})^{kq+m} (\mbox{rev} \Pi_{m-1})^{-m} \mbox{rev} \Delta_{m-1} \mbox{rev} \Delta_{q-1}^{m+1}$, is the same in $B_q$ as $\gamma \beta_2$, represented by $\mbox{rev} \Delta_{m-1} \mbox{rev} \Delta_{q-1}^{m+1}(\mbox{rev}\Pi_{q-1})^{kq+q-m} (\mbox{rev} \Pi_{q-m-1})^{-q+m}$. Once again, we use the reverse words to simplify notation. After reversing the braid words, the goal is to show that  $\Delta_{q-1}^{m+1} \Delta_{m-1} (\Pi_{m-1})^{-m} (\Pi_{q-1})^{kq+m}$ represents the same element of $B_q$ as $(\Pi_{q-m-1})^{-q+m} (\Pi_{q-1})^{kq+q-m} \Delta_{q-1}^{m+1} \Delta_{m-1}$. (We will ignore that we have reversed the words and refer to rev$\beta_i$ and rev$\gamma$ as $\beta_i$ and $\gamma$, respectively, for the sake of easier notation. Because the words are reversed, our new braids are $\gamma \beta_1$ and $\beta_2 \gamma$.)

By the proof of the special case $n=1$ in Theorem \ref{fiberedttks}, $\beta_1 $ is represented by the braid word $\Pi_{q-1}^{m} \Pi_{q-2}^{m-1} \cdots \Pi_{q-m+1}^2  \Pi_{q-m} (\Pi_{q-1})^{kq}$ and $\beta_2$ is represented by $\Pi_{q-1}^{q-m} \Pi_{q-2}^{q-m-1} \cdots \Pi_{m+1}^2  \Pi_{m} (\Pi_{q-1})^{kq}$. Since $(\Pi_{q-1})^{q}$ is central in $B_q$ and appears in each of $\beta_1$ and $\beta_2$ with the same exponent, the braid words will be conjugate by $\gamma$ exactly when the word  $ \gamma \beta_1 = \Delta_{q-1}^{m+1} \Delta_{m-1} \Pi_{q-1}^{m} \Pi_{q-2}^{m-1} \cdots \Pi_{q-m+1}^2  \Pi_{q-m}$ is equal to the word $ \beta_2 \gamma=  \Pi_{q-1}^{q-m} \Pi_{q-2}^{q-m-1} \cdots \Pi_{m+1}^2  \Pi_{m} \Delta_{q-1}^{m+1} \Delta_{m-1}$. (Again, we ignore the change in the words and refer to $ \gamma \beta_1 (\Pi_{q-1})^{-kq}$  and $\beta_2 \gamma (\Pi_{q-1})^{-kq}$ as $\gamma\beta_1$ and $ \beta_2 \gamma$, respectively.)

We first consider $\gamma \beta_1$. Because $\Delta_{q-1}^{m+1}$ and $\Delta_{m-1}$ commute, $\gamma\beta_1$ can be rewritten as $ \Delta_{m-1} \Delta_{q-1}^{m+1} \Pi_{q-1}^{m} \Pi_{q-2}^{m-1} \cdots \Pi_{q-m+1}^2  \Pi_{q-m}$. By Lemma \ref{L1}, the generators that make up $\Delta_{q-1}^{m+1}$ will drop in index by one each time we pass them through a $\Pi_{i}^j$ in $\Pi_{q-1}^{m} \Pi_{q-2}^{m-1} \cdots \Pi_{q-m+1}^2  \Pi_{q-m}$. Then the index of each generator will drop by $m$ if we pass $\Delta_{q-1}^{m+1}$ all the way to the right of the word, and $\gamma \beta_1$ becomes $ \Delta_{m-1} \Pi_{q-1}^{m} \Pi_{q-2}^{m-1} \cdots \Pi_{q-m+1}^2  \Pi_{q-m} \Delta_{q-m-1}$. By the definition of $\Delta_{m-1}$, $\gamma\beta_1 =\Pi_{m-1} \Pi_{m-2} \cdots \Pi_1 \Pi_{q-1}^{m} \Pi_{q-2}^{m-1} \cdots \Pi_{q-m+1}^2  \Pi_{q-m} \Delta_{q-m-1}$. Using the commutativity properties of non-adjacent generators of $B_q$, we slide each $\Pi_i$ in $\Pi_{m-1} \Pi_{m-2} \cdots \Pi_1$ to the right until it is directly left of $\Pi_{q-m+i}^{i+1}$. Now $\gamma \beta_1$ is written as $\Pi_{m-1} \Pi_{q-1}^m \Pi_{m-2} \Pi_{q-2}^{m-1} \cdots \Pi_{1} \Pi_{q-m+1}^2 \Pi_{q-m} \Delta_{q-m-1}$. Because $\Pi_{l-1} \Pi_s^l = \Pi_s$, we write $\gamma \beta_1$ as $\Pi_{q-1}\Pi_{q-2} \cdots\Pi_{q-m+1}\Pi_{q-m}\Delta_{q-m-1} = \Delta_{q-1}$. Then the goal is to show $\beta_2 \gamma$ is also equal to $\Delta_{q-1}$.

Since $ \beta_2 \gamma = \Pi_{q-1}^{q-m} \Pi_{q-2}^{q-m-1} \cdots \Pi_{m+1}^2  \Pi_{m} \Delta_{q-1}^{m+1} \Delta_{m-1}$,  we examine how each generator appearing in $\Delta_{q-1}^{m+1}$ interacts with $\Pi_{q-1}^{q-m} \Pi_{q-2}^{q-m-1} \cdots \Pi_{m+1}^2  \Pi_{m}$. By the definition of $\Delta_s^l$, $\Delta_{q-1}^{m+1} = \Pi_{q-1}^{m+1} \Delta_{q-2}^{m+1}$. Thus, we begin with the word $\Pi_{q-1}^{q-m} \Pi_{q-2}^{q-m-1} \cdots \Pi_{m+1}^2  \Pi_{m} \si_{m+1}$. By Lemma \ref{lemma!}, $\Pi_s^{k+1} \Pi_s^k = \si_k \Pi_s^{k+1} \Pi_{s-1}^k$, so we can say that $\Pi_{m+1}^2 \Pi_m \si_{m+1} = \Pi_{m+1}^2 \Pi_{m+1} = \si_1 \Pi_{m+1}^2 \Pi_{m}$. Since $\si_1$ commutes with all of the generators appearing before it in $\beta_2\gamma$, we rewrite the word as $ \si_1 \Pi_{q-1}^{q-m} \Pi_{q-2}^{q-m-1} \cdots \Pi_{m+1}^2  \Pi_{m} \Pi_{q-1}^{m+2} \Delta_{q-1}^{m+1} \Delta_{m-1}$. 

This process is repeated with each generator in $\Pi_{q-1}^{m+2}$. That is, for $l \in \{m+2, \cdots, q-1\}$, $\si_l$ commutes with the $\Pi_i^j$ in $\Pi_{q-1}^{q-m} \Pi_{q-2}^{q-m-1} \cdots \Pi_{m+1}^2  \Pi_{m}$ for $i<l-1$. When we have pushed $\si_l$ to the left as far as possible, it is directly on the right of $\Pi_{l-1}^{l-m}$. Then $\beta_2 \gamma$ will contain a subword of the form $\Pi_{l}^{l-m+1} \Pi_{l}^{l-m}$, and Lemma \ref{lemma!} gives that this subword is equal to $\si_{l-m} \Pi_{l}^{l-m+1} \Pi_{l-1}^{l-m}$. Since $\si_{l-m}$ has index at least two smaller than every generator appearing in the word $\Pi_{q-1}^{q-m} \cdots \Pi_{l+1}^{l-m+2}$, $\si_{l-m}$ can be moved to the left of  $\Pi_{q-1}^{q-m} \cdots \Pi_{l+1}^{l-m+2}$ in $\beta_2 \gamma$. The final step of this process is moving $\si_{q-1}$ until it is on the right of $\Pi_{q-1}^{q-m} \Pi_{q-2}^{q-m-1}$. Since $\Pi_{q-1}^{q-m} \Pi_{q-2}^{q-m-1}\si_{q-1} = \Pi_{q-1}^{q-m} \Pi_{q-1}^{q-m-1}$, we can use Lemma \ref{lemma!} to say $\beta_2 \gamma= \Pi_{q-m-1} \Pi_{q-1}^{q-m} \Pi_{q-2}^{q-m-1} \cdots \Pi_{m+1}^2  \Pi_{m} \Delta_{q-2}^{m+1} \Delta_{m-1}$.

Now write $\Delta_{q-2}^{m+1}$ as $\Pi_{q-2}^{m+1} \Delta_{q-3}^{m+1}$. Using the process described in the previous paragraph, we see $\Pi_{q-2}^{q-m-1} \cdots \Pi_{m+1}^2  \Pi_{m} \Pi_{q-2}^{m+1} = \Pi_{q-m-2} \Pi_{q-2}^{q-m-1} \cdots \Pi_{m+1}^2  \Pi_{m}$. For each $m+1\le l\le q-1$, $\Pi_{l}^{m+1}$ can be pushed across $\Pi_{l}^{l-m} \cdots \Pi_m $ at the cost of changing the index of the generators so that for $m+1 \le i \le l$, the index $i$ is changed to $i-m$. In $\Pi_{q-1}^{q-m} \cdots \Pi_m$, there are $q-m$ groupings of the type $\Pi_i^j$, and in $\Delta_{q-1}^{m+1}$, there are $q-m-1$ groupings of the type $\Pi_i^j$, so there will be no groupings of $\Delta_{q-1}^{m+1}$ left on the right of the word when this process is complete. Then $\beta_2 \gamma = \Pi_{q-m-1} \Pi_{q-1}^{q-m} \Pi_{q-m-2} \Pi_{q-2}^{q-m-1} \cdots \Pi_{1} \Pi_{m+1}^2 \Pi_m \Delta_{m-1}$. The subword  $\Pi_{q-m-1} \Pi_{q-1}^{q-m} \Pi_{q-m-2} \Pi_{q-2}^{q-m-1} \cdots \Pi_{1} \Pi_{m+1}^2 \Pi_m$ can be rewritten as $\Pi_{q-1}\Pi_{q-2}\cdots \Pi_{m+1} \Pi_m$. Now $\beta_2 \gamma = \Pi_{q-1}\Pi_{q-2}\cdots \Pi_{m+1} \Pi_m \Delta_{m-1} = \Delta_{q-1}$, as desired.

Now we have shown that the knots are isotopic in $S^3$. Each knot has surface slope $kq^2+mq - m^2$. A homeomorphism of $f:S^3\rightarrow S^3$ sending $(F, K_1)$ to $(F, K_2)$ has to send $H$ to either $H$ or $H'$.  For $m=1$, $H[K_1]$ and $H'[K_1]$ are both solid tori, but $H[K_2] \cong D^2(k, q-1)$, where each of $k$ and $q-1$ is at least 2. Then the homeomorphism of from $(F, K_1)$ to $(F, K_2)$ would extend to a homeomorphism between a solid torus and a nontrivial Seifert fibered space, an impossibility.  For $m>1$, $K_1$ is $(k,m)$-Seifert with respect to $H$, $K_2$ is $(k, q-m)$-Seifert with respect to $H$, and both of $K_1$ and $K_2$ are primitive with respect to $H'$. Since $f(H) = H'$ would mean $f|_H$ extends to a homeomorphism of the solid torus to a nontrivial Seifert fibered space, $f$ must send $H$ to itself. Then the homeomorphism from $(F, K_1)$ to $(F, K_2)$ would extend to a homeomorphism of $D^2(k,m)$ and $D^2(k, q-m)$. Since $q-m>m$, there can be no such homeomorphism. Hence no homeomorphism $f:S^3 \rightarrow S^3$ can send $(F,K_1)$ to $(F,K_2)$.

\end{proof}

Since all of the p/p-p/S knots in Theorem \ref{thebigone} are torus knots,  one wonders if all p/p-p/S knots are torus knots, or whether some are hyperbolic. As mentioned above, recently Eudave-Mu\~noz, Miyazaki and Motegi showed, through different methods, that there is another family of p/S-p/S knots that are different than the ones shown here. Their findings suggest that these two families of p/S-p/S knots are not alone; there are likely more families of p/S-p/S knots to be found. 


\begin{thebibliography}{10}

\bibitem{berge}
J.~Berge.
\newblock Some knots with surgeries yielding lens spaces.
\newblock {\em unpublished manuscript}.

\bibitem{bgk}
J.~Berge, B.~Guntel, and S.~Kang.
\newblock Primitive/primitive and primitive/{S}eifert classification.
\newblock {\em preprint}.

\bibitem{knots}
Gerhard Burde and Heiner Zieschang.
\newblock {\em Knots}, volume~5 of {\em de Gruyter Studies in Mathematics}.
\newblock Walter de Gruyter \& Co., Berlin, second edition, 2003.

\bibitem{dean}
John~C. Dean.
\newblock Small {S}eifert-fibered {D}ehn surgery on hyperbolic knots.
\newblock {\em Algebr. Geom. Topol.}, 3:435--472, 2003.

\bibitem{emreducible}
Mario Eudave-Mu{\~n}oz.
\newblock Surgery on strongly invertible knots.
\newblock {\em An. Inst. Mat. Univ. Nac. Aut\'onoma M\'exico}, 26:41--57
  (1987), 1986.

\bibitem{eudavemunoz}
Mario Eudave-Mu{\~n}oz.
\newblock On hyperbolic knots with {S}eifert fibered {D}ehn surgeries.
\newblock In {\em Proceedings of the {F}irst {J}oint {J}apan-{M}exico {M}eeting
  in {T}opology ({M}orelia, 1999)}, volume 121, pages 119--141, 2002.

\bibitem{emmm}
Mario Eudave-Mu{\~n}oz, Katura Miyazaki, and Kimihiko Motegi.
\newblock Seifert fibered surgeries with distinct primitive/seifert positions.
\newblock {\em preprint}.

\bibitem{gars}
F.~A. Garside.
\newblock The braid group and other groups.
\newblock {\em Quart. J. Math. Oxford Ser. (2)}, 20:235--254, 1969.

\bibitem{mypaper}
B.~Guntel.
\newblock {Knots with distinct primitive/primitive and primitive/Seifert
  representatives}.
\newblock {\em to appear in J. Knot Theory Ramifications},
  http://arxiv.org/abs/0909.0476.

\bibitem{yini}
Yi~Ni.
\newblock Knot {F}loer homology detects fibred knots.
\newblock {\em Invent. Math.}, 170(3):577--608, 2007.

\bibitem{osvathszabo}
Peter Ozsv{\'a}th and Zolt{\'a}n Szab{\'o}.
\newblock On knot {F}loer homology and lens space surgeries.
\newblock {\em Topology}, 44(6):1281--1300, 2005.

\bibitem{stallings}
John~R. Stallings.
\newblock Constructions of fibred knots and links.
\newblock In {\em Algebraic and geometric topology ({P}roc. {S}ympos. {P}ure
  {M}ath., {S}tanford {U}niv., {S}tanford, {C}alif., 1976), {P}art 2}, Proc.
  Sympos. Pure Math., XXXII, pages 55--60. Amer. Math. Soc., Providence, R.I.,
  1978.

\end{thebibliography}

\end{document}